\documentclass[12pt, a4paper]{amsart}

\usepackage{amsfonts,amsmath,amssymb, amscd,fullpage}
\usepackage{stmaryrd}
\usepackage[all]{xy}

\newtheorem{theorem}{Theorem}[section]

\newtheorem{proposition}[theorem]{Proposition}

\newtheorem{question}[theorem]{Question}

\theoremstyle{definition}

\theoremstyle{remark}
\newtheorem{remark}[theorem]{Remark}

\newcommand\pf{\begin{proof}}
\newcommand\epf{\end{proof}}

\newcommand\Cc{\mathcal{C}}
\newcommand\Dd{\mathcal{D}}
\newcommand\Mm{\mathcal M}

\newcommand\ext{\mathrm{Ext}}

\newcommand\cd{\mathrm{cd}}
\newcommand\pd{\mathrm{pd}}

\DeclareMathOperator{\gldim}{gldim}

\numberwithin{equation}{section}

\title{Monoidal invariance of the cohomological dimension of  Hopf algebras: the finite case}

\author{Julien Bichon}
\address{ Universit\'e Clermont Auvergne, CNRS, LMBP, F-63000 CLERMONT-FERRAND, FRANCE}
\email{julien.bichon@uca.fr}

\subjclass[2010]{16T05, 16E40, 16E10}

\begin{document}

\begin{abstract}
	A consequence of the recent work of Ren and Zhu on Gorenstein projective dimensions of modules over Hopf algebras is that if $A$ and $B$ are Hopf algebras with bijective antipodes having equivalent linear tensor categories of comodules and both having finite global dimensions, then their global dimensions coincide.
	In this note we provide a direct proof of this result, without using  Gorenstein projective dimensions, and we notice that the assumption on the bijectivity of the antipodes can be removed.  
\end{abstract}

\maketitle


\section{introduction}

The following question, which enters into the general classical problem of  determining which properties (ring-theoretical, homological...) of an algebra are preserved by ``deformation", was originally asked in \cite{bi16}:

 \begin{question}\label{ques0}
	If $A$ and $B$ are Hopf algebras having equivalent linear tensor categories of comodules, do we have $\gldim(A)= \gldim(B)$? 
\end{question}

After a number of positive partial results \cite{wyz, bi22}, the recent works of R. Zhu \cite{zhu} and of W. Ren and R. Zhu \cite{renzhu} provide definitive answers. First, Zhu \cite{zhu} has shown that the answer to Question \ref{ques0} is no in general, by exhibiting Hopf algebras $A, B$ with $\Mm^A \simeq^\otimes\Mm^B$ (this is the symbol we use to indicate that $A$ and $B$ have equivalent linear tensor categories of comodules) and  $\gldim(A)=1$ and $\gldim(B)=\infty$.  The next question then is if there are still counterexamples if the global dimensions both are assumed to be finite, and the paper \cite{renzhu} by Ren and Zhu, which deals with Gorenstein projective dimensions of modules over Hopf algebras, has the following positive consequence: 

	\begin{theorem}\label{thm:cd0}
		Let $A$, $B$ be Hopf algebras with bijective antipodes such that  $\Mm^A \simeq^\otimes\Mm^B$. If $\gldim(A)$ and $\gldim(B)$ are finite, then $\gldim(A)=\gldim(B)$.
	\end{theorem}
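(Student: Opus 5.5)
The plan is to route the comparison through the categories of Yetter--Drinfeld modules. These are invariant under monoidal equivalence of comodule categories, and the trivial object $k$ is preserved. Recall that for a Hopf algebra $A$ one has $\gldim(A)=\cd(A)=\pd_A(k)$, where $k$ is the trivial module. I will try to show that, whenever $\cd(A)<\infty$,
\[
\cd(A) \;=\; \pd_{\mathcal{YD}^A_A}(k),
\]
the projective dimension of $k$ in the abelian category $\mathcal{YD}^A_A$ of (right--right) Yetter--Drinfeld modules. Granting this, the theorem follows at once. By Schauenburg's theory (bi-Galois objects), $\Mm^A\simeq^\otimes\Mm^B$ induces an equivalence of linear tensor categories $\mathcal{YD}^A_A\simeq^\otimes \mathcal{YD}^B_B$, via the centers of the comodule categories. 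This equivalence is in particular an equivalence of abelian categories sending $k$ to $k$. Hence $\pd_{\mathcal{YD}^A_A}(k)=\pd_{\mathcal{YD}^B_B}(k)$, and therefore $\gldim(A)=\gldim(B)$ when both are finite.

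For the inequality $\cd(A)\le \pd_{\mathcal{YD}}(k)$, I would use the free Yetter--Drinfeld modules $V\boxtimes A$ ($V$ an $A$-comodule, with module structure given by right multiplication and the coadjoint-type coaction). These are projective in $\mathcal{YD}^A_A$, and there are enough of them: any $M$ is a quotient of $M\boxtimes A$. Every projective object of $\mathcal{YD}^A_A$ is a direct summand of a free one, hence projective as an $A$-module. So a projective resolution of $k$ in $\mathcal{YD}^A_A$ is an $A$-projective resolution of $k$, giving the inequality. I expect this step to need no antipode bijectivity, provided the coaction on $V\boxtimes A$ is written using $S$ only, not $S^{-1}$. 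This matters, since the point is to drop that hypothesis.

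For the reverse inequality, set $n=\cd(A)<\infty$. Take a resolution of $k$ by free Yetter--Drinfeld modules and let $\Omega$ be its $n$-th syzygy in $\mathcal{YD}^A_A$. Since the resolution is $A$-projective and $\pd_A(k)=n$, $\Omega$ is projective as an $A$-module. The key lemma is then: \emph{a Yetter--Drinfeld module which is projective as an $A$-module is projective in $\mathcal{YD}^A_A$.} I would attack it as follows. For $M\in\mathcal{YD}^A_A$, the natural epimorphism $M\boxtimes A\to M$ given by the action should split in $\mathcal{YD}^A_A$ as soon as it splits $A$-linearly. To see this, transport an $A$-linear section through the tensor structure: the map $M\boxtimes A\cong M\otimes (k\boxtimes A)$ identifies the free module on $M$ with $M$ tensored with the free module on $k$. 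Then use that $M\otimes -$ preserves the relevant split structure, and average using the comodule structure.

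I expect this lemma to be the main obstacle. If the direct averaging fails, the fallback is a dimension-shifting argument using the tensor structure of $\mathcal{YD}^A_A$. Namely, show that $\Omega\otimes X$ has Yetter--Drinfeld projective dimension bounded independently of $X$, then use that $\Omega\otimes -$ applied to the (finite-length in $A$-mod) resolution of $k$ forces $\ext^{m}_{\mathcal{YD}}(\Omega,-)=0$ for $m>0$. This is exactly where finiteness of $\gldim(A)$ enters.
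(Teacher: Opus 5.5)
\textbf{The gap.} Your central claim, that $\cd(A)=\pd_{\mathcal{YD}^A_A}(k)$ whenever $\cd(A)<\infty$, is false, and so is the key lemma that an $A$-projective Yetter--Drinfeld module is projective in $\mathcal{YD}^A_A$. Both fail already under the hypotheses of the statement.

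Let $G$ be a finite group, let $k$ have characteristic $p$ dividing $|G|$, and take $A=k^G$. This Hopf algebra is commutative, so $S^2=\mathrm{id}$. As an algebra $A$ is a finite product of copies of $k$. Hence $\cd(A)=0$ and every $A$-module is projective.

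Now compute $\mathcal{YD}^A_A$. Write the $A$-action as a $G$-grading $M=\bigoplus_g M_g$ and the coaction as a left $G$-action. The Yetter--Drinfeld condition then becomes $h\cdot M_g\subseteq M_{hgh^{-1}}$. So $\mathcal{YD}^A_A$ splits along conjugacy classes. The factor supported in degree $e$ contains the unit object $k$, which is the trivial $G$-module in degree $e$. That factor is the whole category of $kG$-modules. Therefore $\pd_{\mathcal{YD}^A_A}(k)=\pd_{kG}(k)=\infty$, because $kG$ is self-injective and not semisimple, while $\cd(A)=0$. In particular $k$ is $A$-projective without being projective in $\mathcal{YD}^A_A$.

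Two further remarks on the steps you proposed:
\begin{itemize}
\item The averaging you propose needs a normalized integral on $A$, i.e.\ cosemisimplicity. That is exactly what fails here: the integral $f\mapsto\sum_g f(g)$ vanishes on $1$. No dimension-shifting fallback can rescue a false statement.
\item The same example shows that $V\boxtimes A$ is projective in $\mathcal{YD}^A_A$ only when $V$ is a projective comodule; for instance $k\boxtimes A$ is not. This slip is harmless for your inequality $\cd(A)\le\pd_{\mathcal{YD}^A_A}(k)$, which is correct.
\end{itemize}

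There is also a structural obstruction. $\pd_{\mathcal{YD}}(k)$ is invariant under $\Mm^A\simeq^\otimes\Mm^B$ and bounds $\cd$ from above. So your claim would make finiteness of $\gldim$ a monoidal invariant. Zhu's example recalled in the introduction, with $\gldim(A)=1$ and $\gldim(B)=\infty$, shows that this is false.

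\textbf{What the paper does instead.} It compares $A$ and $B$ separately with a quantity that depends on the chosen Galois object: the Hochschild dimension $\cd(R)$ of an $A$-$B$-bi-Galois object $R$.
\begin{itemize}
\item On the $A$-side it uses the adjunction $F_R^l:{}_A\Mm\rightleftarrows{}_R\Mm_R:\kappa_{l,*}$; on the $B$-side, $F_R^r:\Mm_B\rightleftarrows{}_R\Mm_R:\kappa_{r,*}$.
\item Both functors are exact, and the canonical functor sends the trivial module to $R$.
\item The Miyashita--Ulbrich restriction turns free $R$-bimodules into free modules, and every projective module is a direct summand of such an image.
\end{itemize}
Proposition \ref{prop:pd} then uses finiteness exactly where your route cannot. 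Suppose $\pd_A({}_\varepsilon k)=m<\infty$. Then there is a projective $P$ with $\ext^m_A({}_\varepsilon k,P)\neq0$. This $P$ is a summand of some $\kappa_{l,*}(Q)$ with $Q$ projective, and $\ext^m_{{}_R\Mm_R}(R,Q)\cong\ext^m_A({}_\varepsilon k,\kappa_{l,*}(Q))\neq0$. This gives $\cd(R)=\gldim(A)$, and likewise $\cd(R)=\gldim(B)$ when $\gldim(B)$ is finite. At no point is one forced to compare with the possibly larger Yetter--Drinfeld projective dimension.
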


Indeed \cite[Theorem 3.2]{renzhu} proves that  if $A$, $B$ have bijective antipodes and $\Mm^A \simeq^\otimes\Mm^B$, then the Gorenstein global dimensions of $A$ and $B$ coincide, and hence, since when the global dimension of an algebra is finite, it coincides with the Gorenstein global dimension \cite{holm},  Theorem \ref{thm:cd0} follows.

The aim of the present note is to provide a proof of Theorem \ref{thm:cd0} that does not use or mention Gorenstein projective dimensions. We also notice that the assumption of bijectivity of the antipode can be removed.

Througout the paper we work over a fixed base field $k$,  and we assume that the reader has familiarity with the theory of Hopf algebras, of which we use some standard notation.

\section{The key result}

We begin by a general result on the preservation of projective dimensions by functors under certain assumptions. As in \cite{chewen},  if $\Cc$ and $\Dd$ are categories, and $F : \Cc \to \Dd$ and $G : \Dd \to \Cc$ are functors, the notation $F : \Cc \rightleftarrows \Dd : G$ means  that $G$ is right adjoint to $F$.  Also, as in \cite{chewen}, if $\mathcal S$ is a class of  objects of  an abelian category $\Cc$, the notation  ${\rm add}(\mathcal S$) stands for the full subcategory of $\Cc$ whose objects are the finite directs sums of directs summands of objects of $\mathcal{S}$ (hence if $\mathcal{S}$ is stable under direct sums, then ${\rm add}(\mathcal S)$ simply consists of directs summands of objects of $\mathcal{S}$). The notation ${\rm Proj}(\Cc)$ stands for the class of projective objects of $\Cc$.

The following result can be obtained as  a straightforward consequence of \cite[Proposition 3.7]{chewen} (if we make the additional assumption that the functor $F$ below is faithful), but we give a proof that does not use Gorenstein projective dimension.

\begin{proposition}\label{prop:pd}
	Let $\Cc, \Dd$ be abelian categories having enough projective objects, and let
	$F : \Cc \rightleftarrows \Dd : G$ be pair of adjoint functors. Assume that $F$ and $G$ are exact  and that ${\rm add}(G({\rm Proj}(\Dd)))= {\rm Proj}(\Cc)$. Then, for any object $X$ in $\Cc$ such that $\pd_\Cc(X)$ is finite, we have $\pd_\Cc(X)= \pd_\Dd(F(X))$.
\end{proposition}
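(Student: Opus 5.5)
The plan is to prove the two inequalities separately. The inequality $\pd_\Dd(F(X))\leq \pd_\Cc(X)$ comes from $F$ preserving projective resolutions. The reverse inequality comes from transporting a nonvanishing $\ext^n$ along the adjunction, after first arranging that its second argument is projective.

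\emph{Step 1: $F$ preserves projectives.} Since $F$ is left adjoint to $G$, we have $\mathrm{Hom}_\Dd(F(P),-)\cong \mathrm{Hom}_\Cc(P,G(-))$ naturally. If $P$ is projective and $G$ is exact, this functor is a composite of exact functors, hence exact, so $F(P)$ is projective in $\Dd$.

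\emph{Step 2: the upper bound and an Ext comparison.} Let $P_\bullet\to X$ be a projective resolution in $\Cc$. By exactness of $F$ and Step 1, $F(P_\bullet)\to F(X)$ is a projective resolution in $\Dd$. Taking a resolution of length $n=\pd_\Cc(X)$ already gives $\pd_\Dd(F(X))\leq n$. Moreover, for any $Y$ in $\Dd$, the adjunction gives an isomorphism of complexes $\mathrm{Hom}_\Dd(F(P_\bullet),Y)\cong \mathrm{Hom}_\Cc(P_\bullet,G(Y))$. Taking cohomology yields
$$\ext^i_\Dd(F(X),Y)\cong \ext^i_\Cc(X,G(Y)) \quad \text{for all } i\geq 0.$$

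\emph{Step 3: nonvanishing against a projective.} Since $\pd_\Cc(X)=n$ is finite, there is an object $M$ with $\ext^n_\Cc(X,M)\neq 0$. Choose an epimorphism $Q\to M$ with $Q$ projective and kernel $K$. The long exact sequence contains $\ext^n_\Cc(X,Q)\to\ext^n_\Cc(X,M)\to \ext^{n+1}_\Cc(X,K)=0$, so the first map is surjective and $\ext^n_\Cc(X,Q)\neq 0$. This is the step where finiteness of $\pd_\Cc(X)$ is essential: we need $\ext^{n+1}_\Cc(X,-)=0$.

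\emph{Step 4: the lower bound.} By the hypothesis ${\rm Proj}(\Cc)\subset {\rm add}(G({\rm Proj}(\Dd)))$, the object $Q$ is a direct summand of $G(P_1)\oplus\cdots\oplus G(P_r)\cong G(P_1\oplus\cdots\oplus P_r)$, where the $P_i$ are projective in $\Dd$. Put $P'=P_1\oplus\cdots\oplus P_r$, which is projective. Since $\ext^n_\Cc(X,-)$ is additive, $\ext^n_\Cc(X,G(P'))\neq 0$. By Step 2, $\ext^n_\Dd(F(X),P')\neq 0$, so $\pd_\Dd(F(X))\geq n$ and equality holds.

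The only points needing care are the Ext isomorphism in Step 2, which rests on Step 1, and the reduction in Step 3 to a projective second argument. Both are standard, and I expect no real obstacle. Only the inclusion ${\rm Proj}(\Cc)\subset{\rm add}(G({\rm Proj}(\Dd)))$ is used.
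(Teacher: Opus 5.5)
Your proposal is correct and follows essentially the same route as the paper: $F$ preserves projectives, which gives the upper bound, and the adjunction yields $\ext^i_\Dd(F(X),Y)\simeq\ext^i_\Cc(X,G(Y))$; a long exact sequence argument then produces a projective $P$ with $\ext^n_\Cc(X,P)\neq 0$, and the hypothesis on ${\rm add}(G({\rm Proj}(\Dd)))$ transports this nonvanishing to $\Dd$. The only differences are cosmetic: you derive the Ext isomorphism directly from the resolution $F(P_\bullet)$ instead of citing a reference, and you write out the finite-direct-sum bookkeeping in ${\rm add}$ explicitly.
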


\begin{proof}
Since $G$ is exact and $F$ is left adjoint to $G$, a classical argument shows that $F$ preserves projective objects, and since $F$ is exact, it easily follows that for any object $X$ of $\Cc$, we have $\pd_\Dd(F(X))\leq  \pd_\Cc(X)$. Notice also that by  \cite[Corollary 1]{adarie}, we have for any object $Q$ in $\Dd$
\[\ext^*_\Dd(F(X), Q)\simeq \ext^*_\Cc(X,G(Q))\]
which also proves that $\pd_\Dd(F(X))\leq  \pd_\Cc(X)$.
Assume now that $\pd_\Cc(X)=m$ is finite. A classical argument using the long $\ext^*_\Cc$ exact sequence then shows that there is a projective object $P$ in $\Cc$ such that $\ext^m_\Cc(X, P)\not=\{0\}$. The last assumption ensures that there exist $Q$ a projective object in $\mathcal D$ and an object $P'$ of $\Cc$ such that
$G(Q)\simeq P \oplus P'$
Hence we have
\[\ext^m_\Dd(F(X), Q)\simeq \ext^m_\Cc(X,G(Q))\simeq \ext^m_\Cc(X,P\oplus P')\simeq \ext^m_\Cc(X,P)\oplus \ext^m_\Cc(X, P')\not=\{0\}\]
which shows that $\pd_\Dd(F(X))\geq m = \pd_\Cc(X)$, and finishes the proof.
\end{proof}

\section{Cohomological dimension of Hopf-Galois objects}

We now provide the proof of Theorem \ref{thm:cd0}, following a path that is now standard, see \cite{yu,wyz,bi22}, and consists of comparing the cohomological dimension of a Hopf algebra with the one of its Hopf-Galois objects, and use the fact that monoidal equivalences as in Question \ref{ques0} always arise from bi-Galois objects \cite{sc1}.

Let $A$ be a Hopf algebra and let $R$ be a left (resp. right) $A$-comodule algebra. Recall that the \textsl{canonical functor associated to $R$} is the functor
\begin{alignat*}{3}
	F_R^l : {_A \mathcal M} &\longrightarrow {_R\mathcal M}_R  \quad \quad \text{(resp.} \	F_R^r :  &\mathcal M_ A&\longrightarrow {_R\mathcal M}_R \\
	V &\longmapsto V \odot R \quad & V &\longmapsto R \odot V)
\end{alignat*}
where $V\odot R$ (resp $R\odot V$) is $V\otimes R$ (resp. $R\otimes V$) as a vector space, and has $R$-bimodule structure given by
\begin{align*}x\cdot (v\otimes y)\cdot z &= x_{(-1)}\cdot v \otimes x_{(0)}yz, \quad x,y,z \in R, \ v\in V
\\ 
(\text{resp.} \ x\cdot (y\otimes v)\cdot z &= xyz_{(0)} \otimes v\cdot z_{(1)} \quad x,y,z \in R, \ v\in V)
\end{align*}
The left (resp. right) $A$-comodule algebra $R$ is said to be a left (resp. right) $A$-Galois object if  $R$ is  non-zero and the canonical map
\begin{alignat*}{3}
	{\rm can}_l : R\otimes R & \longrightarrow A \otimes R  \quad \quad\quad \quad  (\text{resp.} \ {\rm can}_r :& R\otimes R & \longrightarrow R \otimes A \\
	x\otimes y &\longmapsto x_{(-1)} \otimes x_{(0)}y,  \quad &x\otimes y &\longmapsto xy_{(0)} \otimes y_{(1)},)
\end{alignat*}
is bijective. See \cite{scsurv} for a general overview of the theory of Hopf-Galois objects. 

For a left (resp. right) $A$-Galois object $R$, 
the map $ \kappa_l : A \to R\otimes R$ defined by $\kappa_l (a) = {\rm can}_l^{-1}(a\otimes 1)$ (resp. $ \kappa_r : A \to R\otimes R$ defined by $\kappa_r (a) = {\rm can}_r^{-1}(1\otimes a)$) defines then an algebra map $ A \to R\otimes R^{\rm op}$ (resp. $ A \to R^{\rm op}\otimes R$), for which we use a Sweedler type notation $\kappa_l(a)=a^{[1]} \otimes a^{[2]}$ (resp. $\kappa_r(a)=a^{\langle 1 \rangle } \otimes a^{\langle 2\rangle }$). 
The above algebra map defines in the usual way the restriction functor
\[\kappa_{l,*} : {_R\mathcal M}_R \to {_A\mathcal M}, \quad (\text{resp.} \ \kappa_{r,*} : {_R\mathcal M}_R \to {\mathcal M}_A ,)\]
that associates to an $R$-bimodule $M$ the left (resp. right) $A$-module $\kappa_{l,*}(M)$ (resp. $\kappa_{r,*}(M)$) having $M$ as underlying vector space and left (resp. right) $A$-action, called the Miyashita-Ulbrich action, defined by $a\cdot x = a^{[1]}.x.a^{[2]}$  (resp.  $x\cdot a = a^{\langle 1\rangle }.x.a^{\langle2\rangle }$).

\begin{proposition}\label{prop:adjcan}
	Let $A$ be a Hopf algebra and let $R$ be a left (resp. right) $A$-Galois object. Then 
	\[F_R^l : {_A\mathcal M}  \leftrightarrows   {_R\mathcal M}_R : \kappa_{l,*} \quad (\text{resp.}  \ F_R^r : {\mathcal M}_A  \leftrightarrows   {_R\mathcal M}_R : \kappa_{r,*})\] form a pair of adjoint functors  that satisfy the assumptions of  Proposition  \ref{prop:pd}. 
\end{proposition}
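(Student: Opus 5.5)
I will treat the left case; the right case is symmetric. The plan has four steps: establish the adjunction, check exactness of both functors, show that $F_R^l(A)$ is a free bimodule, and then obtain the condition on projectives.

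\emph{Adjunction.} Given $V\in{_A\mathcal M}$ and $M\in{_R\mathcal M}_R$, I write the unit
\[
\eta_V : V\to \kappa_{l,*}(V\odot R), \qquad v\mapsto v\otimes 1 .
\]
I then check that it is $A$-linear. This amounts to the identity
\[
a^{[1]}\cdot(v\otimes 1)\cdot a^{[2]} = a^{[1]}{}_{(-1)}v\otimes a^{[1]}{}_{(0)}a^{[2]} = av\otimes 1 ,
\]
which holds because ${\rm can}_l(\kappa_l(a))=a\otimes 1$.

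The counit is
\[
\varepsilon_M : \kappa_{l,*}(M)\odot R\to M, \qquad m\otimes y\mapsto my .
\]
Left $R$-linearity of $\varepsilon_M$ requires $x_{(-1)}^{[1]}\,m\,x_{(-1)}^{[2]}x_{(0)} = xm$, that is, $x_{(-1)}^{[1]}\otimes x_{(-1)}^{[2]}x_{(0)} = x\otimes 1$. This is the other standard Galois identity, obtained from ${\rm can}_l^{-1}\circ{\rm can}_l={\rm id}$ applied to $x\otimes 1$. Right linearity is clear. The triangle identities are then direct.

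\emph{Exactness.} Both functors are the identity or $-\otimes R$ on underlying vector spaces, so both are exact.

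\emph{The free module $F_R^l(A)$.} Next I prove ${\rm add}(\kappa_{l,*}({\rm Proj}({_R\mathcal M}_R)))={\rm Proj}({_A\mathcal M})$. Since $F_R^l$ preserves projectives (it is left adjoint to an exact functor), I first compute $F_R^l(A)=A\odot R$. The map
\[
{\rm can}_l : R\otimes R\to A\otimes R
\]
is an isomorphism of $R$-bimodules from $R\otimes R$ (outer structure) to $A\odot R$. The needed compatibility is ${\rm can}_l(xy\otimes z w)=x_{(-1)}y_{(-1)}\otimes x_{(0)}y_{(0)}zw$, which matches the action $x\cdot(a\otimes r)\cdot w = x_{(-1)}a\otimes x_{(0)}rw$. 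Hence $A\odot R\cong R\otimes R$ is a free bimodule.

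\emph{Projectives.} The inclusion $\subseteq$ needs $\kappa_{l,*}$ to send projectives to projectives. Projective bimodules are summands of free ones $(R\otimes R)^{(I)}$, and $\kappa_{l,*}$ commutes with direct sums, so it suffices to show that $\kappa_{l,*}(R\otimes R)$ is projective. Using the isomorphism above,
\[
\kappa_{l,*}(R\otimes R)\cong \kappa_{l,*}(A\odot R) .
\]
I expect the latter to be isomorphic to the left $A$-module $A\otimes R$ with action on the first factor only. This would follow from a twisting map of the form $a\otimes r\mapsto a\,S(r_{(-1)})\otimes r_{(0)}$, or a variant adapted to the Miyashita--Ulbrich action, and the resulting module is free. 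This verification is the main obstacle. In particular, in the right-hand case it must be done without assuming bijectivity of the antipode, which is where care is needed.

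Conversely, for $\supseteq$, any projective $A$-module $P$ is a summand of a free module $A^{(I)}$. The unit $\eta_A: A\to\kappa_{l,*}(A\odot R)$ is split by $a\otimes r\mapsto a\,\varepsilon$-type maps. More simply, once $\kappa_{l,*}(A\odot R)\cong A\otimes R$ is known to be free with $A$ as a summand (take $R\to k$ a linear splitting of $k1\subset R$), $A$ is a summand of $\kappa_{l,*}(R\otimes R)$. Hence $A^{(I)}$, and therefore $P$, lies in ${\rm add}(\kappa_{l,*}({\rm Proj}))$. Together with the inclusion $\subseteq$, this gives the equality and completes the verification of the hypotheses of Proposition \ref{prop:pd}.
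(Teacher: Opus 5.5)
You handle the adjunction (unit, counit, and the two Galois identities), exactness, the bimodule isomorphism ${\rm can}_l\colon R\otimes R\to A\odot R$, and the closing summand argument for ${\rm Proj}({_A\mathcal M})\subset{\rm add}(\kappa_{l,*}({\rm Proj}({_R\mathcal M}_R)))$ correctly, and these match the paper. The gap is the one step that carries the content of the proposition: that $\kappa_{l,*}(R\otimes R)\cong\kappa_{l,*}(A\odot R)$ is a free left $A$-module. You only say you expect this, and the twisting map you offer is wrong.

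Take $R=A=kG$ with $G$ non-abelian. Then $\kappa_l(g)=g\otimes g^{-1}$, the Miyashita--Ulbrich action is conjugation, and $\kappa_{l,*}(A\odot R)$ is $kG\otimes kG$ with $g\cdot(k\otimes h)=gk\otimes ghg^{-1}$. The map $k\otimes h\mapsto kh^{-1}\otimes h$ is $A$-linear in neither direction between this module and the free one.

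You also leave the right-hand case undone: you call it symmetric, yet flag it as the place where non-bijectivity of the antipode needs care. That is exactly where the paper does its work: Yu's Lemma 2.1 on the left, and on the right the explicit isomorphism $x\otimes a\mapsto a^{\langle 1\rangle}x\otimes a^{\langle 2\rangle}$, whose inverse is computed via a cogroupoid.

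The missing idea is the colinearity of $\kappa_l$. Since ${\rm can}_l$ is left $A$-colinear (coaction on the first factor of $R\otimes R$, comultiplication on the first factor of $A\otimes R$), so is ${\rm can}_l^{-1}$. Evaluating at $a\otimes 1$ gives
\[
a^{[1]}{}_{(-1)}\otimes a^{[1]}{}_{(0)}\otimes a^{[2]}=a_{(1)}\otimes a_{(2)}^{[1]}\otimes a_{(2)}^{[2]},
\]
with no antipode involved. Hence $\kappa_{l,*}(A\odot R)$ is $A\otimes R$ with $b\cdot(a\otimes r)=b_{(1)}a\otimes b_{(2)}\cdot r$, where $b_{(2)}\cdot r$ is the Miyashita--Ulbrich action. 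The map $a\otimes r\mapsto a_{(1)}\otimes a_{(2)}\cdot r$ is then an $A$-linear isomorphism from the free module ${_AA}\otimes R$, with inverse $a\otimes r\mapsto a_{(1)}\otimes S(a_{(2)})\cdot r$. One checks that its composite with ${\rm can}_l^{-1}$ is exactly the paper's map $a\otimes x\mapsto a^{[1]}\otimes xa^{[2]}$.

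On the right, the analogous identity $b^{\langle 1\rangle}\otimes b^{\langle 2\rangle}{}_{(0)}\otimes b^{\langle 2\rangle}{}_{(1)}=b_{(1)}^{\langle 1\rangle}\otimes b_{(1)}^{\langle 2\rangle}\otimes b_{(2)}$ identifies $\kappa_{r,*}(R\odot A)$ with $R\otimes A$ under $(y\otimes a)\cdot b=y\cdot b_{(1)}\otimes ab_{(2)}$. This module is free via $y\otimes a\mapsto y\cdot a_{(1)}\otimes a_{(2)}$, with inverse $y\otimes a\mapsto y\cdot S(a_{(1)})\otimes a_{(2)}$. Only $S$ appears, never $S^{-1}$. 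So once this step is supplied, your route covers both cases without bijectivity of the antipode, and even without the cogroupoid.
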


\begin{proof}
It is well-known that 	$F_R^l : {_A\mathcal M}  \leftrightarrows   {_R\mathcal M}_R : \kappa_{l,*}$ form a pair of adjoint functors, with the unit and counit of the adjunction being given by the following morphisms, for $V$ a left $A$-module and $M$ an $R$-bimodule
\begin{align*}
	\eta_V : V &\longmapsto \kappa_{l,*}(V\odot R) \quad \quad \varepsilon_M : \kappa_{l,*}(M)\odot R \to M \\
	v &\longmapsto v\otimes 1 \quad \quad \quad \quad  \quad \quad\quad \quad m\otimes x \longmapsto m\cdot x 
\end{align*}
It is clear that $F_R^l$ and $\kappa_{l,*}$ are exact. We have, by \cite[Lemma 2.1]{yu} and its proof, a left $A$-module isomorphism
\begin{align*}
{_AA \otimes R}	&\longrightarrow\kappa_{l,*}(_RR\otimes R_R) \\
a \otimes x &\longmapsto a^{[1] }\otimes xa^{[2]}
\end{align*}
If $V$ is a vector space, the above isomorphism is easily adapted to an $A$-module isomorphism $\kappa_{l,*}(_RR\otimes V \otimes R_R) \simeq {_AA \otimes (V \otimes R)}$, so that $\kappa_{l, *}$ transforms free $R$-bimodules in free $A$-modules, and hence projective  $R$-bimodules into projective $A$-modules: we have $\kappa_{l,*}({\rm Proj}({_R\mathcal M}_R ))\subset {\rm Proj}({_A\mathcal M})$. 
A free $A$-module $_AA\otimes V$ is direct summand of the   free $A$-module $_AA \otimes (V \otimes R)\simeq \kappa_{l,*}(_RR\otimes V \otimes R_R)$, hence belongs to $\mathrm{add}(\kappa_{l,*}(\mathrm{Proj}({_R\mathcal M}_R)))$. Projective modules are themselves direct summands of free $A$-modules, hence we have $\mathrm{Proj}(_A\mathcal M) \subset \mathrm{add}(\kappa_{l,*}(\mathrm{Proj}({_R\mathcal M}_R)))$, so we conclude that $\mathrm{Proj}(_A\mathcal M) = \mathrm{add}(\kappa_{l,*}(\mathrm{Proj}({_R\mathcal M}_R)))$, as required.

The proof in the case of a right Galois object works, similarly, with the right $A$-module isomorphism 
\begin{align*}
	{R \otimes A_A}	&\longrightarrow\kappa_{r,*}(_RR\otimes R_R) \\
	x \otimes a &\longmapsto a^{\langle 1\rangle }x \otimes a^{\langle 2\rangle }
\end{align*}
whose is inverse is conveniently obtained using a cogroupoid $\mathcal{C}$ with two objects $X$, $Y$ such that $R =\Cc(X,Y)$ and $A=\Cc(Y,Y)$ (see \cite{bic14}, the cogroupoid can be obtained by considering fibre functors on left comodules as in \cite{bic14}, or by more direct manipulations as in \cite{gru}). In this setting, the above morphism  is 
\begin{align*}
	\Cc(X,Y) \otimes \Cc(Y,Y)	&\longrightarrow \Cc(X,Y)\otimes \Cc(X,Y)\\
	a^{XY} \otimes b^{YY} &\longmapsto S_{YX}(b^{YX}_{(1)}) a^{XY} \otimes b^{XY}_{(2)}
\end{align*}
and its inverse is 
\begin{align*}
	\Cc(X,Y) \otimes \Cc(X,Y)	&\longrightarrow \Cc(X,Y)\otimes \Cc(Y,Y)\\
	a^{XY} \otimes b^{XY} &\longmapsto S_{YX}S_{XY}(b^{XY}_{(1)}) a^{XY} \otimes b^{YY}_{(2)}
\end{align*}
With this construction, we see that  $\kappa_{r, *}$ transforms free $R$-bimodules in free right $A$-modules, and the rest of the arguments are similar to the left case.
\end{proof}

We now are ready for the comparison of the cohomological dimensions of a Hopf algebra with its Galois objects, with the well-known subtlety that for Galois objects one has to switch from the global dimension to the Hochschild cohomological dimension. Recall that for an algebra $R$, the Hochschild cohomological dimension of $R$, that we denote $\cd(R)$, is the projective dimension of $R$ in the category  ${_R\mathcal M}_R$. It is well known (see the appendix of \cite{wyz} for example) that for a Hopf algebra we have $\gldim(A) = \cd(A)= \pd_A(_\varepsilon k)= \pd_A(k_\varepsilon)$, where  $_\varepsilon k$ and $k_\varepsilon$ are the respective trivial left and right $A$-modules.

\begin{theorem}\label{thm:cdGalois}
	Let $A$ be a Hopf algebra and let $R$ be a left or right $A$-Galois object. If $\cd(A)$ is finite, we have $\cd(A)=\cd(R)$.
\end{theorem}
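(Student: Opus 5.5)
Apply Proposition \ref{prop:pd} to the adjunction of Proposition \ref{prop:adjcan}, taking $X$ to be the trivial module. Treat the left case; the right case is identical with $F_R^r$, $\kappa_{r,*}$ and $k_\varepsilon$ in place of $F_R^l$, $\kappa_{l,*}$ and ${_\varepsilon k}$. Since $\cd(A)=\gldim(A)=\pd_A({_\varepsilon k})$, the hypothesis gives that $\pd_{{_A\mathcal M}}({_\varepsilon k})$ is finite. Proposition \ref{prop:adjcan} says the adjoint pair $F_R^l \dashv \kappa_{l,*}$ satisfies the hypotheses of Proposition \ref{prop:pd}, with $\Cc={_A\mathcal M}$ and $\Dd={_R\mathcal M}_R$. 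Both categories are module categories, so they have enough projectives. Proposition \ref{prop:pd} therefore gives
\[\cd(A)=\pd_{{_A\mathcal M}}({_\varepsilon k})=\pd_{{_R\mathcal M}_R}\bigl(F_R^l({_\varepsilon k})\bigr).\]

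It remains to identify $F_R^l({_\varepsilon k})={_\varepsilon k}\odot R$ with the bimodule $R$. As a vector space this is $k\otimes R\cong R$. Under this identification the left action is $x\cdot(1\otimes y)=\varepsilon(x_{(-1)})1\otimes x_{(0)}y=1\otimes xy$, by the counit property of the coaction. The right action is plain multiplication, $(1\otimes y)\cdot z = 1\otimes yz$. So $F_R^l({_\varepsilon k})\simeq R$ as $R$-bimodules, and hence $\pd_{{_R\mathcal M}_R}(R)=\cd(R)$. This gives $\cd(A)=\cd(R)$.

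For a right Galois object we have $R\odot k_\varepsilon\simeq R$ by the same computation, now using $z_{(0)}\varepsilon(z_{(1)})=z$. There is no real obstacle in this argument: the substantive work, namely the condition ${\rm add}(\kappa_*({\rm Proj}))={\rm Proj}$, was already done in Proposition \ref{prop:adjcan}. The only point needing care is checking that finiteness of $\cd(A)$ is exactly finiteness of $\pd_A$ of the trivial module on the correct side. This follows from the quoted equalities $\gldim(A)=\pd_A({_\varepsilon k})=\pd_A(k_\varepsilon)$.
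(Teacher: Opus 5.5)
Your proposal is correct and follows the paper's proof: apply Proposition~\ref{prop:pd} to the adjunction of Proposition~\ref{prop:adjcan} with $X$ the trivial module on the appropriate side, using $\cd(A)=\gldim(A)=\pd_A({_\varepsilon k})=\pd_A(k_\varepsilon)$ and the fact that the canonical functor sends the trivial module to the bimodule $R$. Your explicit counit computation verifying $F_R^l({_\varepsilon k})\simeq R$ and $F_R^r(k_\varepsilon)\simeq R$ as $R$-bimodules fills in a step the paper states without calculation.
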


\begin{proof}
If $R$ is a left (resp. right) $A$-comodule algebra, the canonical functor 	
\[	F_R^l : {_A \mathcal M} \longrightarrow {_R\mathcal M}_R  \quad \quad \text{(resp.} \	F_R^r :  \mathcal M_ A\longrightarrow {_R\mathcal M}_R) \]
sends the trivial left $A$-module $_\varepsilon k$ (resp the trivial right $A$-module $k_\varepsilon)$ to the $R$-bimodule $R$. Hence if  $R$ is left or right $A$-Galois, and $\gldim(A)=\pd_A({_\varepsilon k})=\pd_A(k_\varepsilon)$ is finite,  then by Proposition \ref{prop:adjcan} and Proposition \ref{prop:pd} we have $\gldim(A)= \pd_A({_\varepsilon k})=\pd_A(k_\varepsilon) = \pd_{_R\mathcal M_R}(R)=\cd(R)$.
\end{proof}

\begin{remark}
	In  \cite[Theorem 1.1]{bi24} the conclusion of Theorem \ref{thm:cdGalois} was reached under the additional assumption that $A$ has bijective antipode  and $R$ has a unital  twisted trace with respect to a semi-colinear automorphism. This is of course surpassed by Theorem \ref{thm:cdGalois} above, but we think that it still would be interesting to know whether a Galois object has a unital  twisted trace with respect to a semi-colinear automorphism.
\end{remark}

\begin{remark}
	When $A$ has bijective antipode, the case of right Galois objects in the above theorem can be deduced from the left case,  by noticing that in that case, if $R$ is right $A$-Galois, then $R^{\rm op}$ is left $A$-Galois. The small effort we have made in not deducing the right case from the left case (or vice versa) is what allows us to remove the assumption of the bijectivity of the antipodes in the forthcoming theorem.
\end{remark}


\begin{theorem}
	Let $A$, $B$ be Hopf algebras  such that  $\Mm^A \simeq^\otimes\Mm^B$. If $\gldim(A)$ and $\gldim(B)$ are finite, then $\gldim(A)=\gldim(B)$.
	\end{theorem}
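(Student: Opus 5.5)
The plan is to realize the monoidal equivalence $\Mm^A \simeq^\otimes \Mm^B$ by a bi-Galois object and to apply Theorem \ref{thm:cdGalois} on each side. By Schauenburg's theorem \cite{sc1}, there is an $A$-$B$-bi-Galois object $R$. This is a nonzero algebra which is at once a left $A$-comodule algebra and a right $B$-comodule algebra, the two coactions commuting, and $R$ is left $A$-Galois and right $B$-Galois. The equivalence itself is $V \mapsto V \square_A R$. This existence statement needs no hypothesis on the antipodes. That is exactly why the paper kept the left and right cases of Theorem \ref{thm:cdGalois} separate: we will not have to pass to $R^{\rm op}$, which would require bijectivity.

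The argument then runs as follows. First, since $\gldim(A)=\cd(A)$ is finite and $R$ is a left $A$-Galois object, Theorem \ref{thm:cdGalois} gives $\cd(R)=\cd(A)=\gldim(A)$. Second, since $\gldim(B)=\cd(B)$ is finite and $R$ is a right $B$-Galois object, the same theorem applied to the right case gives $\cd(R)=\cd(B)=\gldim(B)$. Comparing the two equalities yields $\gldim(A)=\cd(R)=\gldim(B)$. The equalities $\gldim=\cd$ for Hopf algebras are the standard facts recalled before Theorem \ref{thm:cdGalois}.

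The main point to check is the first step: we need the existence of a bi-Galois object with $R$ left Galois over $A$ and right Galois over $B$, with the orientation matching the convention $\Mm^A$ for right comodules, and valid for arbitrary antipodes. If Schauenburg's result is stated with the opposite orientation, that is, $R$ right $A$-Galois and left $B$-Galois, the argument is unaffected, because Theorem \ref{thm:cdGalois} covers both left and right Galois objects. So both orientations are handled and the only input is \cite{sc1}.

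It is also worth noting why both finiteness hypotheses are needed. Theorem \ref{thm:cdGalois} only yields $\cd(R)$ from the finiteness of $\cd$ of the Hopf algebra on the same side. Moreover, Zhu's example shows that one cannot drop the finiteness of one side.
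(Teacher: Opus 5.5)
Your proposal is correct and matches the paper's proof. Schauenburg's theorem gives an $A$-$B$-bi-Galois object $R$, and Theorem \ref{thm:cdGalois}, applied once to the left $A$-Galois structure and once to the right $B$-Galois structure, yields $\gldim(A)=\cd(R)=\gldim(B)$; since both cases of that theorem are proved directly, no bijectivity of the antipodes is needed.
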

	
	\begin{proof}
		By \cite{sc1} the existence of a monoidal equivalence $\Mm^A \simeq^\otimes\Mm^B$ implies the existence of an $A$-$B$-bi-Galois object $R$, hence by Theorem \ref{thm:cdGalois} we have  $\gldim(A)=\cd(R)=\gldim(B)$ if  $\gldim(A)$ and $\gldim(B)$ are finite.
	\end{proof}

\begin{remark}
Of course Hopf algebras that do not have a bijective antipode are rare and seem to be pathological, so  we do not claim removing	the bijectivity assumption for the antipodes in Theorem \ref{thm:cd0} is a very  important improvement. Nevertheless, Hopf algebras having non bijective antipode exist, and there are some interesting ones, such as the free Hopf algebras generated by matrix coalgebras from \cite{tak71}, for which some non-trivial monoidal equivalences of comodule categories exist, see \cite[Section 5]{bic03}.
\end{remark}

\end{document}